\setlist[enumerate]{leftmargin=7mm, label=\alph*)}
\definecolor{codedarkgreen}{RGB}{51, 133, 4}
\definecolor{codemaroon}{RGB}{133, 5, 63}
\definecolor{codeteal}{RGB}{0, 128, 96}
\lstdefinelanguage{Macaulay2}{
basicstyle=\small\ttfamily,
alsoletter=",
classoffset=1,
keywords={matrix,minors,gb,transpose},
keywordstyle={\color{blue}},
classoffset=2,
morekeywords={from, to, list},
keywordstyle={\color{codemaroon}},
classoffset=3,
morekeywords={QQ},
keywordstyle={\color{codedarkgreen}},
classoffset=4,
morekeywords={MonomialOrder},
keywordstyle={\color{codeteal}},
xleftmargin=1.5cm,
xrightmargin=1em,
columns=fullflexible,
keepspaces=true,
stepnumber=1,
numbers=none,
captionpos=b,
showspaces=false,
frame=none
}
\newtheorem{theorem}{Theorem}[section] 
\newtheorem{proposition}[theorem]{Proposition}
\newtheorem{lemma}[theorem]{Lemma}
\theoremstyle{definition}
\newtheorem{remark}[theorem]{Remark}
\theoremstyle{remark}
\newtheorem*{example}{Example}
\newcommand{\bfx}{\mathbf x}
\newcommand{\bfp}{\mathbf p}
\newcommand{\bfq}{\mathbf q}
\title[A short proof for the parameter continuation theorem]{A short proof for the \\ parameter continuation theorem}
\author[V.~Borovik]{Viktoriia Borovik}
\email{vborovik@uni-osnabrueck.de}
\author[P.~Breiding]{Paul Breiding}
\email{pbreiding@uni-osnabrueck.de}
\thanks{University Osnabr\"uck,
Fachbereich Mathematik/Informatik
Albrechtstr.\ 28a,
49076 Osnabrück, Germany. Both authors are supported by the Deutsche Forschungsgemeinschaft (DFG) -- Projektnummer 445466444}
\begin{document}
\maketitle 

\begin{abstract}
The Parameter Continuation Theorem is the theoretical foundation for polynomial homotopy continuation, which is one of the main tools in computational algebraic geometry. In this note, we give a short proof using Gr\"obner bases. Our approach gives a method for computing discriminants.
\end{abstract}
\medskip

\section{Introduction}
A central task in many applications is solving a system of polynomial equations. One approach to solving such systems is \emph{polynomial homotopy continuation}.  To explain the basic idea we consider the polynomial ring 
$$\mathbb C[\bfx,\bfp]:=\mathbb C[x_1,\ldots,x_n,p_1,\ldots,p_k].$$ 
We interpret $\bfx$ as variables and $\bfp$ as {parameters}.

Let $f_1(\bfx;\bfp),\ldots,f_n(\bfx;\bfp)\in \mathbb C[\bfx,\bfp]$. We call the image of the polynomial map
\begin{equation}\label{def_family}\mathbb C^k \mapsto \mathbb C[\bfx]^{\times n},\quad \bfp\mapsto F(\bfx;\bfp) =\begin{bmatrix}
     \ f_1(\bfx;\bfp)\ \\
     \vdots\\
     \ f_n(\bfx;\bfp)\ 
\end{bmatrix}
\end{equation}
a \emph{family} of polynomial systems. That is, a family $\mathcal F=\{F(\bfx;\bfp) \mid \bfp\in\mathbb C^k\}$ consists of~$n$ polynomials in $n$ variables $\bfx$ with $k$ parameters $\bfp$.

Let $F(\bfx)$ be a system of $n$ polynomials in $n$ variables.
The idea in polynomial homotopy continuation is to find a family $\mathcal F$ and parameters $\bfq_1,\bfq_2\in\mathbb C^k$ with the following properties: $F(\bfx)=F(\bfx;\bfq_1)$ and $G(\bfx)=F(\bfx;\bfq_2)$ is another system whose solutions can be computed or are known. One then defines the \emph{parameter homotopy}
$H(\bfx,t):=F(\bfx; (1-t)\bfq_1 + t\bfq_2)$
and \emph{tracks} the zeros of $H(\bfx,t)$ from $t=1$ to $t=0$. This means, given $\bfx\in\mathbb C^n$ with $G(\bfx)=0$, we use numerical algorithms to solve the \emph{Davidenko ODE} 
$
\big(\tfrac{\mathrm d}{\mathrm d \bfx} H(\bfx,t)\big)\ \dot \bfx + \tfrac{\mathrm d}{\mathrm d t} H(\bfx,t) = 0$ \cite{Davidenko, Davidenko_full} 
for the initial value $\bfx(0)= \bfx$.
In this setting, $G(\bfx)$ is called the \emph{start system} and $F(\bfx)$ is called the \emph{target system}. For more on the theory of polynomial homotopy continuation we refer to the textbook of Sommese and Wampler \cite{Sommese:Wampler:2005} or the overview article \cite{NNLA}. 

The \emph{Parameter Continuation Theorem} by Morgan and Sommese~\cite{MS1989} is the theoretical foundation of polynomial homotopy continuation. It implies that the initial value problem above is well posed for almost all parameters. Recall that a zero $\bfx$ of $F(\bfx;\bfq)$ is called \emph{regular} if the Jacobian determinant $\det\big(\tfrac{\partial f_i}{\partial x_j}\big)_{1\leq i,j\leq n}$ at $(\bfx,\bfq)$ does not vanish.
\begin{theorem}[The Parameter Continuation Theorem]\label{theorem_number_zeros}
Let $\mathcal F$ be a family of polynomial systems that consists of systems of $n$ polynomials $F(\bfx;\bfp)$ in $n$ variables~$\bfx$ depending on $k$ parameters $\bfp$. For $\bfq\in\mathbb C^k$ denote 
$$N(\bfq):=\#\{ \bfx\in\mathbb C^n \mid \bfx\text{ is a regular zero of } F(\bfx;\bfq) \}.$$
Let $N:=\sup_{\bfq\in\mathbb C^k} N(\bfq)$. Then, $N<\infty$ and there exists a proper algebraic subvariety~$\Delta \subsetneq \mathbb C^k$, called a {discriminant}, such that 
$N(\bfq)=N$ for all $\bfq\not\in\Delta$.
\end{theorem}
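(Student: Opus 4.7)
My plan is to apply Gr\"obner basis specialization to an extended ideal that encodes exactly the regular zeros of the family. Let $J:=\det(\partial f_i/\partial x_j)$, introduce an auxiliary variable $y$, and form
$$\tilde I := (f_1,\ldots,f_n,\ 1-yJ) \subset K[\bfx,y], \qquad K := \mathbb C(\bfp).$$
For every $\bfq\in\mathbb C^k$, the zeros of $\tilde I|_{\bfp=\bfq}$ in $\mathbb C^{n+1}$ are in bijection with the regular zeros of $F(\bfx;\bfq)$ via $(\bfx,y)\mapsto\bfx$. A direct computation shows that the Jacobian of the $n+1$ generators of $\tilde I$ is block triangular with determinant $-J^2$, nonzero at every zero of $\tilde I$. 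Thus $\tilde I$ cuts out a smooth zero-dimensional subscheme and is radical, so $\widetilde N := \dim_K K[\bfx,y]/\tilde I$ is finite.

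Next I would compute a reduced Gr\"obner basis $G=\{g_1,\ldots,g_s\}$ of $\tilde I$ with respect to any monomial order on $\bfx,y$. After clearing denominators, we may assume $g_i\in\mathbb C[\bfp][\bfx,y]$, and let $\ell_i(\bfp)\in\mathbb C[\bfp]$ be its leading coefficient in the $\bfx,y$-variables. Set $\Delta:=V(\ell_1\cdots\ell_s)\subset\mathbb C^k$, a proper algebraic subvariety since each $\ell_i$ is a nonzero polynomial. The key technical step is the Gr\"obner basis specialization lemma: for $\bfq\notin\Delta$, the collection $\{g_i(\bfx,y;\bfq)\}$ remains a Gr\"obner basis of $\tilde I|_{\bfp=\bfq}$ with identical leading monomials. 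One verifies this via Buchberger's $S$-polynomial criterion: each reduction relation $S(g_i,g_j)=\sum h_k g_k$ valid over $K$ descends to the same relation over $\mathbb C$ whenever the leading coefficients involved are nonzero at $\bfq$, which is exactly what $\bfq\notin\Delta$ ensures.

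Granting this, the number of standard monomials under the initial ideal is $\widetilde N$ for every $\bfq\notin\Delta$, so $\dim_{\mathbb C}\mathbb C[\bfx,y]/\tilde I|_{\bfp=\bfq}=\widetilde N$. The Jacobian computation of the first paragraph also applies over $\mathbb C$, showing every zero of $\tilde I|_{\bfp=\bfq}$ is simple, hence $N(\bfq)=\widetilde N$ for $\bfq\notin\Delta$. To handle arbitrary $\bfq$, I would invoke the implicit function theorem: a regular zero of $F(\bfx;\bfq_0)$ persists under small analytic perturbation of $\bfp$, so $N$ is lower semicontinuous on $\mathbb C^k$. Any $\bfq_0$ with $N(\bfq_0)>\widetilde N$ would force nearby $\bfq\notin\Delta$ to satisfy $N(\bfq)\geq N(\bfq_0)>\widetilde N$, contradicting the previous conclusion. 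Therefore $N(\bfq)\leq\widetilde N$ for all $\bfq$, and $N=\widetilde N<\infty$.

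The main obstacle I anticipate is the Gr\"obner basis specialization lemma. It is a standard fact underlying comprehensive Gr\"obner bases, but making it rigorous requires careful bookkeeping of denominators when passing between $K[\bfx,y]$ and $\mathbb C[\bfp][\bfx,y]$ and tracking which leading coefficients appear in the $S$-polynomial reductions. The choice of $\Delta$ as the vanishing locus of $\ell_1\cdots\ell_s$ is exactly what makes all these reductions survive specialization.
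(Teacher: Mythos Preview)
Your approach is essentially the paper's: both introduce the Rabinowitsch variable $y$ and the relation $1-yh$ to encode regular zeros, both invoke a Gr\"obner basis specialization result to show that the leading monomials (hence the standard-monomial count) are constant off a hypersurface $\Delta$, and both finish with the Implicit Function Theorem for lower semicontinuity. The differences are purely technical. The paper computes the Gr\"obner basis in $\mathbb C[y,\bfx,\bfp]$ with a lex order placing $\bfp$ last, eliminates $y$ to obtain a basis of the saturation $I:J^\infty$, and then specializes via \cite[Ch.~4, \S7, Thm.~2]{CLO15}; you instead work over the function field $K=\mathbb C(\bfp)$ and keep $y$ throughout, which saves the elimination step. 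Your $-J^2$ Jacobian observation is a tidy replacement for the paper's separate argument that regular zeros have multiplicity one.

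One point to tighten: your $\Delta=V(\ell_1\cdots\ell_s)$ ensures that $\{g_i(\bfx,y;\bfq)\}$ is a Gr\"obner basis of the ideal \emph{it generates}, and (by reducing the original generators $f_j$ and $1-yJ$ modulo $G$, which only introduces the $\ell_i$ as denominators) that this ideal contains $\tilde I|_{\bfp=\bfq}$. The reverse inclusion $g_i|_\bfq\in\tilde I|_\bfq$, however, requires clearing denominators in an expression $g_i=\sum a_{ij}f_j+b_i(1-yJ)$ over $K$, and these denominators $d_i(\bfp)$ need not be products of the $\ell_j$. So $\Delta$ must also absorb the $d_i$; this is precisely the ``bookkeeping of denominators'' you anticipate, but your final sentence overstates what $\ell_1\cdots\ell_s$ alone buys. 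The paper sidesteps this issue by never leaving $\mathbb C[\bfp]$-coefficients, so its leading-coefficient discriminant is already sufficient.
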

\begin{example} We consider two examples.
\begin{enumerate}
\item 
The space $\mathcal F = \{ax^2+bx+c\mid a,b,c\in\mathbb C\}$ of univariate quadratic polynomials is a family. Here, there are 3 parameters $\bfp=(a,b,c)$ and we have $N=2$.
\item
Consider the family of polynomials with one parameter $\bfp=a$ defined by
$$F(\bfx;\bfp) = \begin{bmatrix}
     \ (x_1-a) \cdot (x_1 - 1)\phantom{^2} \cdot \gamma(\bfx)\ \\
     \ (x_2-3) \cdot (x_2-4)^2\cdot \gamma(\bfx)\ 
\end{bmatrix},$$
where $\gamma(\bfx)=x_1^2+x_2^2-1$. In this case, $N=2$. Moreover, for all $\bfq\in\mathbb C$ the zero set of $F(\bfx;\bfq)$ contains a curve and at least one singular point. 
\end{enumerate} 
\end{example}

A proof for the Parameter Continuation Theorem can also be found in the textbook \cite{Sommese:Wampler:2005}. The proofs in \cite{MS1989, Sommese:Wampler:2005} rely on the theory of holomorphic vector bundles. In this short note we give an alternative proof using Gr\"obner bases. Our main contribution is derived from Lemma \ref{GB_lemma}, where we show that a discriminant is a lower dimensional subvariety of $\mathbb{C}^k$. This implies path-connectedness of $\mathbb{C}^k\setminus \Delta$. Showing the existence of $N$ homotopy paths of roots can serve as a proof for generic root count theorems, such as the Fundamental Theorem of Algebra (see \cite{BCSS, FTA}) and the BKK Theorem (see \cite{Rojas2002WhyPM}).

Moreover, our proof gives rise to a method for computing discriminants explicitly. We understand a discriminant as a hypersurface $\Delta \subsetneq \mathbb{C}^k$ \eqref{disc} containing all `bad' parameters~$\mathbf{q}$ such that the system $F(\mathbf{x};\mathbf{q})$ has a nonregular zero. The Zariski closure of all such parameters is a variety defined by the elimination ideal 
$$\langle f_1(\bfx;\bfp),\ldots,f_n(\bfx;\bfp), \det\big(\tfrac{\partial f_i}{\partial x_j}\big)_{1\leq i,j\leq n}\rangle \cap \mathbb{C}[\mathbf{p}].$$
This is a generalization of the discriminant $\Delta(f)$ of a univariate polynomial $f$ which vanishes whenever $f$ has a multiple root. On more definitions of discriminants we refer to \cite[Chapters 1,9,12,13]{Gelfand1994DiscriminantsRA}.  A special case of the discriminant, when~$F$ contains~$r$ nonlinear polynomials with the common zero locus $V$ of dimension $n-r$ and $n-r$ linear equations, such that the parameters $\mathbf{p}$ are the coefficients of the linear equations, is called \emph{Hurwitz form of $V$} and was studied in \cite{STURMFELS2017186, 10.1145/3653002.3653003}.
\medskip

\section{Gr\"obner bases, Saturation and Parameterized Ideals}
For the proof of the Parameter Continuation Theorem we use Gr\"obner bases of ideals with parameters. 
Let $f_1(\bfx;\bfp),\ldots,f_n(\bfx;\bfp)$ be polynomials as in (\ref{def_family}) and denote by
$$h(\bfx;\bfp):=\det\big(\tfrac{\partial f_i}{\partial x_j}\big)$$ the Jacobian determinant. We consider two ideals 
$$I:= \langle f_1,\ldots,f_n\rangle\quad\text{and}\quad J:=\big\langle h\rangle.$$
The \emph{saturation} of $I$ by $J$ is the ideal
$$I:J^\infty := \{f \in \mathbb C[\bfx,\bfp]\mid \exists \ell>0: f\cdot h^\ell\in I\}.$$
Saturation corresponds to removing components on the level of varieties: we have  
\begin{equation}\label{saturation_as_variety}\mathbf{V}(I:J^\infty) = \overline{\mathbf{V}(I)\setminus \mathbf{V}(J)};
\end{equation}
see, e.g., \cite[Chapter 4 \S 4, Corollary 11]{CLO15}. 

Let now $\bfq\in\mathbb C^k$ be fixed and define the surjective ring homomorphism
$$\phi_{\bfq}: \mathbb C[\bfx,\bfp]\to \mathbb C[\bfx],\quad f(\bfx;\bfp)\mapsto f(\bfx;\bfq).$$
We consider the two ideals
$$I_{\bfq} := \phi_{\bfq}(I)\quad \text{and}\quad J_{\bfq} := \phi_{\bfq}(J).$$
The Implicit Function Theorem implies that a regular zero of $F(\bfx;\bfq)$ is an isolated point in $\mathbf{V}(I_\bfq)$. Suppose $N(\bfq)=\infty$. Then,  $\mathbf{V}(I_\bfq:J_\bfq^\infty)=\overline{\mathbf{V}(I_\bfq) \setminus \mathbf{V}(J_\bfq)}$ would be of positive dimension contradicting that regular zeros are isolated. Hence, $N(\bfq)<\infty$. A finite union of points is Zariski closed, so $\mathbf{V}(I_{\bfq}:J_{\bfq}^\infty)$ is the set of regular zeros of~$F(\bfx;\bfq)$. This implies 
\begin{equation}\label{key_idea}
N(\bfq) = \#  \mathbf{V}(I_\bfq:J_\bfq^\infty).
\end{equation}

The basic idea for our proof of the Parameter Continuation Theorem is to show that $N(\bfq)$ is the degree of the projection $\mathbf{V}(I:J^\infty)\to \mathbb C^k$, which maps~$(\bfx,\bfq)$ to the parameter $\bfq$. We show that on the level of ideals we have $\phi_{\bfq}(I:J^\infty) = I_{\bfq} : J_{\bfq}^\infty$. One inclusion is straight-forward: suppose that $f\in I:J^\infty$ with $f\cdot h^\ell\in I$. Then, $\phi_{\bfq}(f)^\ell \cdot \phi_{\bfq}(h) = \phi_{\bfq}(f^\ell\cdot h)\in I_{\bfq}$. The other inclusion, however, does not hold in general. Our main contribution is Lemma~\ref{GB_lemma}, where we show that it holds for a general $\bfq$.  

In the following, we consider the polynomial ring $\mathbb C[\bfx,\bfp]$ with the \emph{lex order}
\begin{equation}\label{lex}
x_1>\cdots>x_n>p_1>\cdots>p_k.
\end{equation}

We recall two propositions related to elimination and saturation of ideals with parameters.
\begin{proposition}\label{sat_thm}
Let $I\subset \mathbb C[\bfx,\bfp]$ be an ideal and $J=\langle h\rangle$ be a principal ideal. Let $y$ be an additional variable and
$K:=\langle 1-y\cdot h\rangle.$
Then,
$$I:J^\infty =(I+K) \cap \mathbb C[\bfx, \bfp].$$
Furthermore, if we augment the lex order~(\ref{lex}) by letting $y$ be the largest variable and let $G$ be a Gröbner basis of $I+K$ relative to this order, then $G \cap \mathbb C[\bfx, \bfp]$ is a Gröbner basis of $I:J^\infty$.
\end{proposition}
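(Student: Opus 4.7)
The statement is the classical Rabinowitsch trick for saturation (in the single generator case), combined with the Elimination Theorem, so the plan is to prove the set-theoretic identity first and then extract the Gröbner basis statement as a corollary.

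For the inclusion $I:J^\infty \subseteq (I+K)\cap \mathbb C[\bfx,\bfp]$, I would start from $f\in I:J^\infty$ with $f\cdot h^\ell \in I$ and use the congruence $y^\ell h^\ell = (1-(1-yh))^\ell \equiv 1 \pmod{K}$, which gives $f \equiv y^\ell f h^\ell \pmod K$. Since $y^\ell f h^\ell \in I$ and $f$ lies in $\mathbb C[\bfx,\bfp]$ by hypothesis, the claim follows.

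For the converse inclusion $(I+K)\cap \mathbb C[\bfx,\bfp] \subseteq I:J^\infty$, I would take $f \in (I+K) \cap \mathbb C[\bfx,\bfp]$ and write $f = \sum_i a_i(\bfx,\bfp,y)\,f_i(\bfx,\bfp) + b(\bfx,\bfp,y)\,(1-y h)$ for some $a_i,b\in\mathbb C[\bfx,\bfp,y]$ and some generators $f_1,\dots,f_r$ of $I$. Substituting $y=1/h$ inside the localization $\mathbb C[\bfx,\bfp][h^{-1}]$ kills the last summand and gives $f = \sum_i a_i(\bfx,\bfp,1/h)\,f_i$ in $\mathbb C[\bfx,\bfp][h^{-1}]$. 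Multiplying through by a sufficiently large power $h^\ell$ (taking $\ell$ bigger than the $y$-degree of every $a_i$) clears denominators and yields $f\cdot h^\ell \in I$, so $f \in I:J^\infty$. The only slightly delicate point here is making sure the substitution and denominator-clearing are rigorous, but this is purely a routine manipulation inside $\mathbb C[\bfx,\bfp][h^{-1}]$.

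Finally, the Gröbner basis statement reduces to the Elimination Theorem (e.g.\ \cite[Chapter 3, \S 1, Theorem 2]{CLO15}). With $y$ declared the largest variable in the augmented lex order, this order is an elimination order for $y$, hence for any Gröbner basis $G$ of $I+K$ the intersection $G\cap \mathbb C[\bfx,\bfp]$ is a Gröbner basis of $(I+K)\cap \mathbb C[\bfx,\bfp]$, which by the first part equals $I:J^\infty$. Thus the main conceptual step is the first inclusion chain, and there is no genuine obstacle beyond bookkeeping.
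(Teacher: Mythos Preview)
Your proof is correct and is precisely the standard argument; the paper itself does not give a proof but simply cites \cite[Chapter~4, \S4, Theorem~14]{CLO15}, whose proof is essentially the one you have written out (Rabinowitsch trick for the set equality, followed by the Elimination Theorem for the Gr\"obner basis part). One tiny remark: in the second inclusion you implicitly use that $h\neq 0$ so that $\mathbb C[\bfx,\bfp]\hookrightarrow \mathbb C[\bfx,\bfp][h^{-1}]$ is injective; the case $h=0$ is trivial since then $K=\langle 1\rangle$ and both sides equal the whole ring.
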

\begin{proof}
See \cite[Chapter 4 \S 4, Theorem 14]{CLO15}.
\end{proof}
\begin{proposition}\label{prop_evaluation}
Consider an ideal $L\subset \mathbb C[\bfx,\bfp]$ and let $G = \{g_1,\ldots,g_s\}$ be a Gr\"obner basis for $L$ relative to lex order in (\ref{lex}). For $1\leq i\leq s$ with $g_i \not\in\mathbb C[\bfp]$, write $g_i$ in the form $g_i = c_i(\bfp)\bfx^{\alpha_i} + h_i$, where all terms of $h_i$ are strictly smaller than $\bfx^{\alpha_i}$. Let $\bfq\in \mathbf{V}(L \cap \mathbb C[\bfp] )\subseteq \mathbb C^{k}$, such that $c_i(\bfq)\neq0$ for all $g_i\not\in\mathbb C[\bfp]$. Then,
$$\phi_{\bfq}(G)=\{\phi_{\bfq}(g_i) \mid g_i\not\in\mathbb C[\bfp]\}$$ 
is a Gröbner basis for the ideal $\phi_{\bfq}(L)\subset \mathbb C[\bfx]$.
\end{proposition}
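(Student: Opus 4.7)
My plan is to establish three ingredients which together give the Gr\"obner basis property: vanishing of $\phi_{\bfq}(g_i)$ for $g_i\in\mathbb C[\bfp]$, preservation of leading terms, and a standard-representation statement for every $f\in\phi_{\bfq}(L)$. First, the Elimination Theorem applied to $G$ in the lex order (\ref{lex}) shows that $G\cap\mathbb C[\bfp]$ is a Gr\"obner basis of $L\cap\mathbb C[\bfp]$; together with $\bfq\in\mathbf V(L\cap\mathbb C[\bfp])$ this forces $\phi_{\bfq}(g_i)=0$ for every $g_i\in G\cap\mathbb C[\bfp]$. For $g_i\notin\mathbb C[\bfp]$ the form $g_i=c_i(\bfp)\bfx^{\alpha_i}+h_i$ together with $c_i(\bfq)\neq 0$ yields $\operatorname{LT}(\phi_{\bfq}(g_i))=c_i(\bfq)\bfx^{\alpha_i}$ in the lex order on $\mathbb C[\bfx]$. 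The equality $\phi_{\bfq}(L)=\langle\phi_{\bfq}(G)\setminus\{0\}\rangle$ then follows by lifting any $f\in\phi_{\bfq}(L)$ to some $\tilde f\in L$, expanding $\tilde f=\sum A_i g_i$, and applying $\phi_{\bfq}$.

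For the Gr\"obner basis property I would show that every $f\in\phi_{\bfq}(L)\setminus\{0\}$ has $\operatorname{LM}(f)$ divisible by some $\bfx^{\alpha_k}$ with $g_k\notin\mathbb C[\bfp]$. Starting from $\tilde f\in L$ with $\phi_{\bfq}(\tilde f)=f$ and a standard representation $\tilde f=\sum A_k g_k$ in $L$, which in particular satisfies $\operatorname{LM}_\bfx(A_k g_k)\leq\operatorname{LM}_\bfx(\tilde f)$, I would apply $\phi_{\bfq}$ to obtain $f=\sum_{g_k\notin\mathbb C[\bfp]}\phi_{\bfq}(A_k)\phi_{\bfq}(g_k)$. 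Setting $\bfx^M:=\max_k \operatorname{LM}_\bfx(\phi_{\bfq}(A_k)\phi_{\bfq}(g_k))$, the easy case $\bfx^M=\operatorname{LM}_\bfx(f)$ finishes the argument: the maximum is attained by some index $k$, and then $\bfx^{\alpha_k}=\operatorname{LM}_\bfx(\phi_{\bfq}(g_k))$ divides $\bfx^M=\operatorname{LM}_\bfx(f)$.

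The main obstacle is the remaining case $\bfx^M>\operatorname{LM}_\bfx(f)$, where the $\bfx^M$-coefficients of individually nonzero summands cancel and the above representation is not yet standard in $\mathbb C[\bfx]$. To resolve it I would use the explicit S-polynomial lift $\tilde S_{k,k_0}:=c_{k_0}(\bfp)\bfx^{\gamma-\alpha_k}g_k-c_k(\bfp)\bfx^{\gamma-\alpha_{k_0}}g_{k_0}\in L$, with $\bfx^\gamma=\operatorname{lcm}(\bfx^{\alpha_k},\bfx^{\alpha_{k_0}})$, which satisfies $\operatorname{LM}_\bfx(\tilde S_{k,k_0})<\bfx^\gamma$ and $\phi_{\bfq}(\tilde S_{k,k_0})=c_k(\bfq)c_{k_0}(\bfq)\,S(\phi_{\bfq}(g_k),\phi_{\bfq}(g_{k_0}))$ by direct expansion. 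Letting $K:=\{k:\operatorname{LM}_\bfx(\phi_{\bfq}(A_k)\phi_{\bfq}(g_k))=\bfx^M\}$ and $d_k$ denote the coefficient of $\bfx^{M-\alpha_k}$ in $\phi_{\bfq}(A_k)$, the scalar relation $\sum_{k\in K}d_k c_k(\bfq)=0$ combined with the S-polynomial identity lets me rewrite $\sum_{k\in K}d_k\bfx^{M-\alpha_k}\phi_{\bfq}(g_k)$ as $\sum_{k\in K\setminus\{k_0\}}d_k c_k(\bfq)\bfx^{M-\gamma}\,S(\phi_{\bfq}(g_k),\phi_{\bfq}(g_{k_0}))$, each summand of which has $\bfx$-leading monomial strictly below $\bfx^M$. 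Expanding each S-polynomial through the standard representation of $\tilde S_{k,k_0}$ in $G$, divided by $c_k(\bfq)c_{k_0}(\bfq)$, produces a new representation of $f$ in terms of $\{\phi_{\bfq}(g_j):g_j\notin\mathbb C[\bfp]\}$ with all summand $\bfx$-leading monomials strictly below $\bfx^M$. Iterating this reduction, which must terminate since the lex order on $\bfx$-monomials is a well-ordering, eventually brings us back to the easy case.
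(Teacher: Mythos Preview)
Your argument is correct. The paper itself does not prove this proposition; it simply cites \cite[Chapter~4 \S 7, Theorem~2]{CLO15}. What you have written is essentially the standard textbook proof behind that citation: one verifies Buchberger's criterion for $\{\phi_{\bfq}(g_i):g_i\notin\mathbb C[\bfp]\}$ by lifting each S-pair to the element $\tilde S_{k,k_0}\in L$, using that $G$ is a Gr\"obner basis of $L$ to obtain a standard representation whose $\bfx$-leading monomials lie strictly below $\bfx^{\gamma}$, and then specializing. The hypotheses $\bfq\in\mathbf V(L\cap\mathbb C[\bfp])$ and $c_i(\bfq)\neq 0$ enter exactly where you use them: to kill the pure-parameter generators and to preserve leading $\bfx$-monomials (and to allow division by $c_k(\bfq)c_{k_0}(\bfq)$).

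Two small points of presentation. First, in your rewriting of $\sum_{k\in K}d_k\bfx^{M-\alpha_k}\phi_{\bfq}(g_k)$ the monomial $\bfx^{\gamma}$ depends on the pair $(k,k_0)$, so it would be clearer to write $\gamma_{k,k_0}$; the argument is unaffected since each $\bfx^{\gamma_{k,k_0}}$ divides $\bfx^{M}$. Second, after the first reduction step the new coefficients need not arise from specializing elements of $\mathbb C[\bfx,\bfp]$, but this is harmless: the S-polynomial rewriting only requires a representation $f=\sum C_j\,\phi_{\bfq}(g_j)$ in $\mathbb C[\bfx]$, together with the fixed standard representations of the lifted $\tilde S_{k,k_0}$, which you already established once and for all. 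With these clarifications your proof is complete and matches the approach the paper defers to the reference.
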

\begin{proof}
See \cite[Chapter 4 \S 7, Theorem 2]{CLO15}.
\end{proof}
\begin{remark}
Weispfenning \cite{WEISPFENNING19921} proved the existence of a \emph{Comprehensive Gröbner Basis}  (CGB). In the notation of Proposition \ref{prop_evaluation} this is a Gr\"obner basis $G$ of $L$, such that $\phi_{\bfq}(G)$ is a Gr\"obner basis for $\phi_{\bfq}(L)$ for \emph{all} $\bfq \in \mathbb C^k$. 
We also refer the reader to the following works of Weispfenning, Montes, Kapur and others \cite{WEISPFENNING2003669, montes1999basic, montes2002new, KAPUR2013124}, which considerably improved the construction and optimized the algorithm for computing a CGB. Nevertheless, for our purposes, Proposition \ref{prop_evaluation} is enough. In addition, the condition of non-vanishing leading coefficients is essential for us.
\end{remark}

The next lemma is our main contribution.
\begin{lemma}\label{GB_lemma}
Let $I\subset \mathbb C[\bfx,\bfp]$ be an ideal and $J=\langle h\rangle$ be a principal ideal, such that $(I:J^\infty) \cap \mathbb C[\bfp] = \{0\}$. Let $G=\{g_1,\ldots,g_s\}$ be a Gr\"obner basis of $I:J^\infty$ relative to the lex order~(\ref{lex}). There is a proper subvariety $\Delta\subsetneq \mathbb C^k$ such that for all $\bfq\not\in\Delta$ the set $\{\phi_{\bfq}(g_1),\ldots,\phi_{\bfq}(g_s)\}$ is a Gr\"obner basis for $\phi_\bfq(I):\phi_\bfq(J)^\infty$ and none of the leading terms of $g_1,\ldots,g_s$ vanish when evaluated at $\bfq$.

In particular, $\phi_\bfq(I:J^\infty) = \phi_\bfq(I):\phi_\bfq(J)^\infty$ for all $\bfq \not\in\Delta$.
\end{lemma}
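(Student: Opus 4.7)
The plan is to first use the hypothesis together with Proposition~\ref{prop_evaluation} to obtain both the nonvanishing of leading terms and the Gr\"obner basis property for the specialized saturation $\phi_\bfq(I:J^\infty)$, and then to use the Rabinowitsch trick from Proposition~\ref{sat_thm} to identify that specialization with $\phi_\bfq(I):\phi_\bfq(J)^\infty$ on a Zariski open set.

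For the first step, the hypothesis $(I:J^\infty)\cap\mathbb C[\bfp]=\{0\}$ gives $\mathbf V((I:J^\infty)\cap\mathbb C[\bfp])=\mathbb C^k$ and forces every $g_i\notin\mathbb C[\bfp]$, so each leading term has the form $c_i(\bfp)\bfx^{\alpha_i}$ with $c_i$ a nonzero polynomial in $\bfp$. Setting $\Delta_1:=\bigcup_i\mathbf V(c_i)$, a proper subvariety of $\mathbb C^k$, Proposition~\ref{prop_evaluation} yields that $\phi_\bfq(G)$ is a Gr\"obner basis of $\phi_\bfq(I:J^\infty)$ for every $\bfq\notin\Delta_1$, and none of the leading terms of the $g_i$ vanishes there.

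The remaining (and main) content is the identity $\phi_\bfq(I:J^\infty)=\phi_\bfq(I):\phi_\bfq(J)^\infty$ for generic $\bfq$; the inclusion $\subseteq$ is the easy direction already recorded before the lemma. To obtain $\supseteq$, I would introduce $y$ and let $\tilde G$ be a Gr\"obner basis of $\tilde L:=I+\langle 1-yh\rangle\subset\mathbb C[\bfx,\bfp,y]$ relative to lex order with $y$ largest. Proposition~\ref{sat_thm} then gives that $G_0:=\tilde G\cap\mathbb C[\bfx,\bfp]$ is a Gr\"obner basis of $I:J^\infty$ and, intersecting further with $\mathbb C[\bfp]$, that $\tilde L\cap\mathbb C[\bfp]=\{0\}$. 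Proposition~\ref{prop_evaluation} in the enlarged ring then produces a proper subvariety $\Delta_2\subsetneq\mathbb C^k$, namely the union of the vanishing loci of the leading coefficients of $\tilde G$, such that $\phi_\bfq(\tilde G)$ is a Gr\"obner basis of $\phi_\bfq(\tilde L)=\phi_\bfq(I)+\langle 1-y\phi_\bfq(h)\rangle$ for $\bfq\notin\Delta_2$. The second half of Proposition~\ref{sat_thm}, now applied to $\phi_\bfq(I)\subset\mathbb C[\bfx]$, shows that $\phi_\bfq(\tilde G)\cap\mathbb C[\bfx]$ is a Gr\"obner basis of $\phi_\bfq(I):\phi_\bfq(J)^\infty$.

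The bridging observation is that under lex order with $y$ largest, the leading term of a polynomial in $\mathbb C[\bfx,\bfp,y]$ contains $y$ exactly when the polynomial itself does, so for $\bfq\notin\Delta_2$ the equality $\phi_\bfq(\tilde G)\cap\mathbb C[\bfx]=\phi_\bfq(G_0)$ holds. Hence $\phi_\bfq(G_0)$ is a Gr\"obner basis of $\phi_\bfq(I):\phi_\bfq(J)^\infty$. Applying the first step to $G_0$ instead of $G$ produces a further proper subvariety $\Delta_1'$ such that $\phi_\bfq(G_0)$ is also a Gr\"obner basis of $\phi_\bfq(I:J^\infty)$ outside $\Delta_1'$; since two ideals sharing a Gr\"obner basis coincide, $\phi_\bfq(I:J^\infty)=\phi_\bfq(I):\phi_\bfq(J)^\infty$ on $\mathbb C^k\setminus(\Delta_1'\cup\Delta_2)$. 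Taking $\Delta:=\Delta_1\cup\Delta_1'\cup\Delta_2$ then gives the claim. The principal obstacle is precisely the inclusion $\supseteq$: a relation $f\cdot\phi_\bfq(h)^\ell\in\phi_\bfq(I)$ in $\mathbb C[\bfx]$ need not lift to any $F\cdot h^\ell\in I$ in $\mathbb C[\bfx,\bfp]$, and it is the Rabinowitsch trick combined with the hypothesis $(I:J^\infty)\cap\mathbb C[\bfp]=\{0\}$ that lets Proposition~\ref{prop_evaluation} transfer saturation along specialization for generic $\bfq$.
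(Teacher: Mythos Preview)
Your proof is correct and uses the same core machinery as the paper: the Rabinowitsch trick (Proposition~\ref{sat_thm}) combined with specialization of Gr\"obner bases (Proposition~\ref{prop_evaluation}). The one structural difference is that the paper \emph{constructs} $G$ inside the proof as $\overline{G}\cap\mathbb C[\bfx,\bfp]$ (your $G_0$) and works directly with that single object, whereas you treat $G$ as an arbitrary given Gr\"obner basis of $I:J^\infty$ and therefore add an extra layer: first showing $\phi_\bfq(G)$ is a Gr\"obner basis of $\phi_\bfq(I:J^\infty)$ via $\Delta_1$, and then separately establishing the identity $\phi_\bfq(I:J^\infty)=\phi_\bfq(I):\phi_\bfq(J)^\infty$ via the auxiliary $G_0$. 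Your reading of the quantifier on $G$ is the more literal one, and your argument handles it cleanly. As a minor simplification, the set $\Delta_1'$ is unnecessary: since $G_0$ generates $I:J^\infty$, the image $\phi_\bfq(G_0)$ generates $\phi_\bfq(I:J^\infty)$ for every $\bfq$, while for $\bfq\notin\Delta_2$ it also generates $\phi_\bfq(I):\phi_\bfq(J)^\infty$; hence the two ideals agree on $\mathbb C^k\setminus\Delta_2$, and $\Delta=\Delta_1\cup\Delta_2$ already suffices.
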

\begin{proof}
Let $y$ be an additional variable and, as in Proposition \ref{sat_thm}, denote 
$$K:= \langle 1 - y\cdot h\rangle.$$
By Proposition \ref{sat_thm}, we have $I:J^\infty = (I+K)\cap \mathbb C[\bfx, \bfp]$.
Since $(I:J^\infty) \cap \mathbb C[\bfp] = \{0\}$, we therefore have 
\begin{equation}\label{empty_inter}
     (I + K) \cap  \mathbb C[\bfp] = \{0\}.
\end{equation}
In particular, $\mathbf{V}((I + K) \cap  \mathbb C[\bfp]) = \mathbb C^k$ and we may therefore apply Proposition~\ref{prop_evaluation} to $I + K$ without putting any restrictions on $\bfq$.

As in Proposition \ref{prop_evaluation}, we augment the lex order~(\ref{lex}) by letting $y$ be the largest variable.
Let $\overline{G}:=\{g_1,\ldots,g_r\}$ be a Gröbner basis of $I+K$ relative to this order. It follows from (\ref{empty_inter}) that we have $g_1,\ldots,g_r \notin \mathbb C[\bfp]$.
We write each $g_i$ in the form~$g_i = c_i(\bfp)y^{\beta}\bfx^{\alpha_i} + h_i$, where all terms of $h_i$ are strictly smaller than~$y^{\beta}\bfx^{\alpha_i}$, and define the hypersurface 
\begin{equation}\label{disc}
\Delta:=\{\bfq\in \mathbb C^k\mid c_1(\bfq) \cdots c_r(\bfq)=0\}.
\end{equation} 

In the following, let $\bfq \in \mathbb C^k \setminus \Delta$.
By Proposition \ref{prop_evaluation}, $\phi_{\bfq}(\overline{G})=\{\phi_{\bfq}(g_1) ,\ldots,\phi_{\bfq}(g_r) \}$ is a Gr\"obner basis for 
$$\phi_{\bfq}(I + K) = \phi_{\bfq}(I) + \phi_{\bfq}(K) = \phi_{\bfq}(I) + (1-y\cdot \phi_{\bfq}(h)).$$
Without restriction, the first $s\leq r$ elements in $\overline{G}$ are those that do not depend on~$y$. We denote $G:=\{g_1,\ldots,g_s\} =  \overline{G}\cap \mathbb C[\mathbf{x}, \mathbf{p}]$.
It follows from Proposition \ref{sat_thm} that $G$ is a Gröbner basis of~$I:J^\infty$. Because $\bfq\not\in \Delta$, none of the leading terms in~$\overline{G}$ when evaluated at $\bfq$ vanish. Consequently, $$\phi_{\bfq}(G) \cap \mathbb C[\mathbf{x}] = \phi_{\bfq}(\overline{G}) \cap \mathbb C[\mathbf{x}].$$ Therefore, $\phi_\bfq(G)=\{\phi_{\bfq}(g_1),\ldots,\phi_{\bfq}(g_s)\}$ is a Gröbner basis of $\phi_\bfq(I):\phi_\bfq(J)^\infty$ by Proposition \ref{sat_thm}.
\end{proof}

\begin{example}
We illustrate Lemma \ref{GB_lemma} using the two examples from the introduction.
\begin{enumerate}
\item For $\mathcal F = \{ax^2+bx+c\mid a,b,c\in\mathbb C\}$ we have $I=\langle ax^2+bx+c\rangle$ and $J=\langle 2ax+b\rangle$. We first compute a Gr\"obner basis for $I:J^\infty$ using \texttt{Macaulay2} \cite{M2}:
\begin{lstlisting}[language=Macaulay2]
R = QQ[x, a, b, c, MonomialOrder => Lex];
f = a * x^2 + b * x + c; h = 2 * a * x + b;
I = ideal {f}; 
J = ideal {h};
S = saturate(I, J); 
G = gens gb S
\end{lstlisting}
This yields $G=\{ax^2+bx+c\}$. Now we consider two sets of parameters. First, $\bfq_1=(1,3,2)$ and then $\bfq_2=(1,-2,1)$.
\begin{lstlisting}[language=Macaulay2]
Iq = sub(I, {a=>1, b=>3, c=>2});
Jq = sub(J, {a=>1, b=>3, c=>2});
Sq = saturate(Iq, Jq); Gq = gens gb Sq
\end{lstlisting}
This gives us the Gr\"obner basis $\{\phi_{\bfq_1}(ax^2+bx+c)\} = \{x^2+3x+2\}$ for $\phi_{\bfq_1}(I)\colon \phi_{\bfq_1}(J)^{\infty}$. 
On the other hand, $\phi_{\bfq_2}(ax^2+bx+c) = x^2-2x+1 = (x-1)^2$, so that in this case, $\phi_{\bfq_2}(I)\colon \phi_{\bfq_2}(J)^{\infty} = \langle 1\rangle$.
\item For the second example we also compute Gr\"obner basis for $I:J^\infty$ using \texttt{Macaulay2} \cite{M2}:
\begin{lstlisting}[language=Macaulay2]
R = QQ[x1, x2, a, MonomialOrder => Lex];
gamma = x1^2 + x2^2 - 1;
f1 = (x1 - a) * (x1 - 1) * gamma;
f2 = (x2 - 3) * (x2 - 4)^2 * gamma;
h = diff(x1, f1) * diff(x2, f2) - diff(x1, f2) * diff(x2, f1);
I = ideal {f1, f2}; 
J = ideal {h};
S = saturate(I, J); 
G = gens gb S
\end{lstlisting}
This yields the Gr\"obner basis 
$G = \{x_2-3,\ x_1^2-ax_1-x_1+a\}$.
We compute the saturation for $\bfq_1=1$ and $\bfq_2=2$. In the first case:
\begin{lstlisting}[language=Macaulay2]
Iq = sub(I, {a=>1});
Jq = sub(J, {a=>1});
Sq = saturate(Iq, Jq)
\end{lstlisting}
gives the ideal $\phi_{\bfq_1}(I)\colon \phi_{\bfq_1}(J)^{\infty} = \langle 1 \rangle$. This is because for $a=1$ the two regular zeros $(a,3)$ and $(1,3)$ come together to  form a singular zero. On the other hand,~$\phi_{\bfq_2}(I)\colon \phi_{\bfq_2}(J)^{\infty}$ has Gr\"obner basis $\phi_{\bfq_2}(G) = \{x_2-3,\ x_1^2-3x_1+2\}$.
\end{enumerate}
\end{example}

We now prove the Parameter Continuation Theorem.
\begin{proof}[Proof of Theorem \ref{theorem_number_zeros}]
If $N=0$, then no system in $\mathcal F$ has regular zeros. In this case, the statement is true. 

We now assume $N>0$. By (\ref{saturation_as_variety}), 
we have $\mathbf{V}(I:J^\infty) = \overline{\mathbf{V}(I)\setminus \mathbf{V}(J)}$. Since
\begin{align*}
 {\bf V}(I) &= \{({\bf x},{\bf q}) : {\bf x} \text{ is a zero of } F({\bf x};{\bf q}) \}  \; \text{ and}\\
{\bf V}(I ) \cap {\bf V}(J ) &= \{({\bf x},{\bf q}) : {\bf x} \text{ is not a regular zero of } F({\bf x};{\bf q})\}, 
\end{align*}
$\mathbf{V}(I:J^\infty)$ is the closure of all $(\bfx,\bfq)\in\mathbb C^n\times \mathbb C^k$ such that~$\bfx$ is a regular zero of~$F(\bfx;\bfq)$.
Since $N>0$, we have $\mathbf{V}(I:J^\infty)\neq \emptyset$. Let $(\bfx,\bfq)\in \mathbf{V}(I:J^\infty)$ such that ${\bf x}$ is a regular zero of $F({\bf x};{\bf q})$. The~Implicit Function Theorem implies that there is a Euclidean open neighbourhood~$U$ of $\bfq$ such that~$F(\bfx;\bfq)$ has regular zeros for all~$\bfq\in U$. Consequently, $(I:J^\infty) \cap \mathbb C[\bfp] = \{0\}$, so we can apply Lemma~\ref{GB_lemma}.

As before, we denote $I_\bfq=\phi_\bfq(I)$ and $J_\bfq=\phi_\bfq(J)$. By Lemma \ref{GB_lemma}, we have $\phi_{\bfq}(I:J^\infty) = I_{\bfq} : J_{\bfq}^\infty$ for general $\bfq$. This implies that $\mathbf{V}(I_\bfq:J_\bfq^\infty)$ is the fiber of the projection  $\mathbf{V}(I:J^\infty)\to \mathbb C^k,\, (\bfx,\bfq)\mapsto \bfq$ for general $\bfq$. Therefore,  $\# \mathbf{V}(I_\bfq:J_\bfq^\infty)$ is the degree of this projection, hence constant and maximal for general $\bfq$. The statement now follows from $N(\bfq)=\# \mathbf{V}(I_\bfq:J_\bfq^\infty)$, which we showed in (\ref{key_idea}).
\end{proof}
\begin{remark}
The following yields a more explicit proof for Theorem \ref{theorem_number_zeros}, not working with degrees of projections. 

The idea is to show that for general $\bfq$ the Gr\"obner bases of $I_{\bfq}:J_{\bfq}^\infty$ all have the same number of \emph{standard monomials}. Recall that one calls a monomial $\bfx^\alpha$ a {standard monomial} of an ideal~$L$ (relative to a monomial order), if it is not in~$\mathrm{LT}(L)$, the ideal of leading terms in $L$. Let $\mathcal B$ be the set of standard monomials of $L$ 
Then, $\mathcal B$ is finite if and only if $\mathbf{V}(L)$ is finite, and $\#\mathcal B$ equals the number of points in $\mathbf{V}(L)$ counting multiplicities (see \cite[Proposition 2.1]{Sturmfels2002}). 

Now, let $G=\{g_1,\ldots,g_s\}$ be a Gr\"obner basis of $I:J^\infty$ relative to the lex order from~(\ref{lex}). By Lemma \ref{GB_lemma}, there is a proper algebraic subvariety $\Delta\subsetneq \mathbb C^k$ such that $\phi_{\bfq}(G)=\{\phi_{\bfq}(g_1),\ldots,\phi_{\bfq}(g_s)\}$ is a Gr\"obner basis for~$I_\bfq:J_\bfq^\infty$ for all~$\bfq\not\in\Delta$. Moreover, none of the leading terms of $g_1,\ldots,g_s$ vanish when evaluated at $\bfq\not\in\Delta$. This implies that the leading monomials of~$I_\bfq:J_\bfq^\infty$ are constant on $\mathbb C^k\setminus \Delta$. Thus, if $\mathcal B_{\bfq}$ denotes the set of standard monomials of~$I_\bfq:J_\bfq^\infty$, also $\mathcal B_{\bfq}$ is constant on $\mathbb C^k\setminus \Delta$. On the other hand, 
$N(\bfq) = \#\mathcal B_{\bfq}$ by~(\ref{key_idea}) and the fact that regular zeros have multiplicity one.
This shows that $N(\bfq)$ is constant on~$\mathbb C^k\setminus\Delta$. The Implicit Function Theorem implies that for all $\bfq\in \mathbb C^k$ there exists a Euclidean neighbourhood $U$ of~$\bfq$ such that  $N(\bfq)\leq N(\bfq')$ for all~$\bfq'\in U$. Since~$\Delta$ is a proper subvariety of $\mathbb C^k$ and thus lower-dimensional, we have~$N(\bfq) = N<\infty$ for~$\bfq\in \mathbb C^k\setminus \Delta$. 
\end{remark}

The description of the discriminant $\Delta$ in (\ref{disc}) leads to an algorithm for computing it: given $I = \langle f_1, \ldots, f_n \rangle$, we first compute the Jacobian determinant $h=\det\big(\tfrac{\partial f_i}{\partial x_j}\big)$. Then, we compute a lex Gröbner basis for $I + \langle 1- y\cdot h \rangle$. The product of the leading coefficients~$c_i(\bfp)$ of this Gröbner basis gives us an equation for the discriminant. It is important to emphasize that this algorithm does \emph{not} yield the smallest hypersurface $\Delta$ with the properties in Theorem \ref{theorem_number_zeros}. We will see this in the next example.

~
\begin{example}
We consider again the two examples from the introduction.
\begin{enumerate}
\item We compute the discriminant for $\mathcal F = \{ax^2+bx+c\mid a,b,c\in\mathbb C\}$:

\begin{lstlisting}[language=Macaulay2]
R = QQ[y, x, a, b, c, MonomialOrder => Lex];
f = a * x^2 + b * x + c; h = 2 * a * x + b;
I = ideal {f}; 
K = ideal {1 - y * h};
G = gens gb (I+K)
\end{lstlisting}
This gives us the Gr\"obner basis 
$$\overline{G}=\{ax^2+bx+c,\ (4ac - b^2)y  + 2xa + b, yxb + 2yc + x, 2yxa + yb - 1\}.$$ The leading terms are $c_1(\bfp)=a$, $c_2(\bfp)=4ac - b^2$, $c_3(\bfp)=b$ and  $c_4(\bfp)=2a$. We get the discriminant
$$\Delta = \{\bfq = (a,b,c)\in\mathbb C^3\mid ab(4ac - b^2)=0\}.$$
Indeed, $ax^2+bx+c$ has less than two regular zeros if and only if $a=0$ or $4ac - b^2 = 0$. The additional factor $b$ is no  contradiction: we show that if~$\bfq\not\in\Delta$ then $N(\bfq)=N$ is maximal, but we do not show that $N(\bfq)=N$ implies $\bfq\not\in \Delta$.

\item In the second example, we compute the discriminant analogously:
\begin{lstlisting}[language=Macaulay2]
R = QQ[y, x1, x2, a, MonomialOrder => Lex];
gamma = x1^2 + x2^2 - 1;
f1 = (x1 - a) * (x1 - 1) * gamma;
f2 = (x2 - 3) * (x2 - 4)^2 * gamma;
h = diff(x1, f1) * diff(x2, f2) - diff(x1, f2) * diff(x2, f1);
I = ideal {f1, f2};
K = ideal {1 - y * h};
G = gens gb (I+K)
\end{lstlisting}
The Gr\"obner basis we get is $\overline{G} = \{g_1,g_2,g_3,g_4\}$ with polynomials $g_1=x_2-3$, $g_2 = x_1^2-ax_1-x_1+a$ and 
\begin{align*}
g_3 &= 81(a^6-2a^5+17a^4-32a^3+80a^2-128a+64)y\\
&\quad +(-a^4-16a^2-145)x_1+a^5+16a^3+64a+81\\
g_4 &= 13122yx_1+81(a^5-a^4+16a^3-16a^2-98a-64)y\\
&\quad +(-a^3-a^2-17a-17)x_1+a^4+a^3+17a^2+17a-81
\end{align*}
The following code then finds the discriminant:
\begin{lstlisting}[language=Macaulay2]
E = (entries(G))#0
P = QQ[a][y, x1, x2, MonomialOrder => Lex]
result = apply(E, t -> leadCoefficient(sub(t, P)))
factor(product result)
\end{lstlisting}
The result is
$$\Delta = \{a\in\mathbb C\mid (a^2+8)(a-1)=0\}$$
as the leading term of $g_3$ is $81(a^2+8)^2(a-1)^2$.
Indeed, the parameters for which we obtain less regular zeros than 2 are $a=1$ (in this case, $(a,3)$ is a double root) and $a=\pm\sqrt{-8}$ (in this case, $(a,3)$ lies on the circle $\gamma(\bfx)=0$). 
\end{enumerate}
\end{example}

\medskip
\bibliographystyle{plain}
\bibliography{literature}

\bigskip

\end{document}